\documentclass[11pt]{article}
\usepackage{amssymb,float,amsthm,hyperref,xcolor,amsmath,tipa}
\hypersetup{
    colorlinks,
    citecolor=violet,
    filecolor=blue,
    linkcolor=blue,
    urlcolor=violet
}

\newtheorem{theorem}{Theorem}[section]
\newtheorem{lemma}[theorem]{Lemma}
\newtheorem{corollary}[theorem]{Corollary}
\newtheorem{proposition}[theorem]{Proposition}

\theoremstyle{definition}
\newtheorem{remark}[theorem]{Remark}

\newtheorem{problem}[theorem]{Open Question}

\usepackage[margin=1in]{geometry}
\counterwithin{figure}{section}

\title{Restricted generalized Schur numbers}
\author{Collier Gaiser\thanks{Department of Mathematics, Community College of Aurora, Aurora, CO 80011,  United States of America. Email: {\tt colliergaiser@gmail.com}}
}
\date{}
\begin{document}
\maketitle

\maketitle
\begin{abstract}
For $k\geq2$, let $S_r(k;\ell)$ be the smallest $n$, if exists, such that every $r$-coloring of $\{1,2,\ldots,n\}$ has a monochromatic solution $\mathcal{S}$ to the equation
\[
x_1+x_2+\cdots+x_k=x_{k+1}
\]
such that the number of distinct integers in $\mathcal{S}$ is exactly $\ell+1$. We prove that, if $\ell\geq2$ is fixed, then
\[
S_2(k;\ell)= k^2+\left[\frac{(\ell+1)(\ell-2)}{2}+2\right]k+\ell(\ell-2)
\]
for all large enough $k$. In particular, we have $S_2(k;2)=k^2+2k$ for all $k\geq3$.
\end{abstract}

{\small \textbf{Keywords:} arithmetic Ramsey theory, Schur numbers} \\
\indent {\small \textbf{AMS 2020 subject classification:} 05D10; 11B75}

\maketitle

\section{Introduction}\label{Section:Intro}
For positive integers $n$ and $r$, an $r$-coloring of $\{1,2,\ldots,n\}$ is a function \[\Delta:\{1,2,\ldots,n\}\to\{1,2,\ldots,r\}.\] Given an $r$-coloring of $\{1,2,\ldots,n\}$, a solution $\mathcal{S}$ in $\{1,2,\ldots,n\}$ to an equation is monochromatic if all integers in $\mathcal{S}$ are mapped to the same integer; that is, they have the same color. Throughout this paper, we use $\mathbb{N}:=\{1,2,\ldots\}$ to denote the set of all positive integers.

In arithmetic Ramsey theory, Schur's theorem \cite{Schur1916} states that, for any $r\in\mathbb{N}$, there exists a smallest $n\in\mathbb{N}$ such that every $r$-coloring of $\{1,2,\ldots,n\}$ has a monochromatic solution to the three-variable equation $x_1+x_2=x_3$. Rado \cite{Rado1933} generalized Schur's theorem and proved a general criterion for any system of linear homogeneous equations to have this property. In particular, for $k,r\in\mathbb{N}$ with $k\geq2$ (the case $k=1$ is trivial and hence we only consider $k\geq2$ in this paper), there exists a smallest $n\in\mathbb{N}$ such that every $r$-coloring of $\{1,2,\ldots,n\}$ has a monochromatic solution to the $(k+1)$-variable equation
\begin{equation}\label{Equaiton:Main}
x_1+x_2+\cdots+x_k=x_{k+1}.
\end{equation}
Interestingly, there is a nice proof of the above result for (\ref{Equaiton:Main}) using a graph-theoretic version of Ramsey's theorem (see \cite[p.~223 and p.~236]{LR2014} or \cite[p.~159]{Robertson2000}).

For $k,r\in\mathbb{N}$ with $k\geq2$, let $S_r(k)$ be the smallest $n\in\mathbb{N}$ such that every $r$-coloring of $\{1,2,\ldots,n\}$ has a monochromatic solution to (\ref{Equaiton:Main}), where $x_1,x_2,\ldots,x_k$ are not necessarily distinct. Due to Schur's theorem, the numbers $S_r(2)$ are called Schur numbers and, in general, the numbers $S_r(k)$ are called generalized Schur numbers \cite[p.~235]{LR2014}. Zn\'{a}m \cite{Znam1966} proved that, for all $k,r\in\mathbb{N}$ with $k\geq2$, we have
\begin{equation}\label{Equation:Znam_Bound}
S_r(k)\geq\frac{k-1}{k}[(k+1)^r-1]+1.
\end{equation}
Matching the above lower bound, Zn\'{a}m \cite{Znam1967} (see also \cite{BB1982}) also proved that $S_2(k)=k^2+k-1$ for all $k\geq2$, and Boza, Mar\'{i}n, Revuelta, and Sanz \cite{BMRS2019} confirmed that $S_3(k)=k^3+2k^2-2$ for all $k\geq2$.

Since the variables $x_1,x_2,\ldots,x_k$ are not necessarily distinct, the values of $S_r(k)$ depend crucially on the ``almost" trivial solutions such as $x_1=1$, $x_2=1$, $\ldots$ , $x_k=1$, $x_{k+1}=k$. In the opposite direction, for $k,r\in\mathbb{N}$ with $k\geq2$, let $S^*_r(k)$ be the smallest $n\in\mathbb{N}$ such that every $r$-coloring of $\{1,2,\ldots,n\}$ has a monochromatic solution to (\ref{Equaiton:Main}), where $x_1,x_2,\ldots,x_k$ are required to be distinct. Irving \cite{Irving1973} showed that $S_r^*(k)$ exists for all $k,r\in\mathbb{N}$ with $k\geq2$, and provided an upper bound for $S_r^*(k)$. Since the variables are required to be distinct, $S_r^*(k)$ is more difficult to determine; currently, we don't have a lower bound like (\ref{Equation:Znam_Bound}) for $S_r^*(k)$. The best lower bounds for $2$- and $3$-colorings are proven recently by Ahmed, Boza, Revuelta, and Sanz \cite{ABRS2023}: $S_2^*(k)\geq(k^3+4k^2-5k+2)/2$ for all $k\geq5$, and $S_3^*(k)\geq(k^4+5k^3-8k+4)/2$ for all $k\geq8$.

In this paper, we introduce and study generalized Schur numbers when the number of distinct integers in the solution is fixed and independent of $k$. For any solution $\mathcal{S}$ to (\ref{Equaiton:Main}), let $c(\mathcal{S})$ denote the number of distinct integers in $\mathcal{S}$. With this notation, for a solution $\mathcal{S}$ to (\ref{Equaiton:Main}) in $\mathbb{N}$, there are exactly $c(\mathcal{S})-1$ positive integers assigned to the variables $x_1,x_2,\ldots,x_k$ on the left hand side of (\ref{Equaiton:Main}) when $k\geq2$. For $r,k,\ell\in\mathbb{N}$ with $\ell\leq k$ and $k\geq2$, let $S_r(k,\ell)$ be the smallest $n\in\mathbb{N}$, if exists, such that every $r$-coloring of $\{1,2,\ldots,n\}$ has a monochromatic solution $\mathcal{S}$ to (\ref{Equaiton:Main}) such that $c(\mathcal{S})=\ell+1$. We call the numbers $S_r(k;\ell)$ \textit{restricted generalized Schur numbers}. Notice that we have $S_r(k;k)=S_r^*(k)$ for all $k\geq2$.

The existence of $S_r(k;\ell)$ is not immediate from existing results in arithmetic Ramsey theory. Hence, for completeness, we first show that, given $k,\ell\in\mathbb{N}$ with $\ell\leq k$ and $k\geq2$, the restricted generalized Schur number $S_r(k;\ell)$ exists if and only if $\ell\geq2$ or $r=1$. Then we prove the following main result on $2$-color restricted generalized Schur numbers:

\begin{theorem}\label{Theorem:Main}
For all $k,\ell\in\mathbb{N}$ with $2\leq\ell\leq k$, we have
    \[
    S_2(k;\ell)\geq k^2+\left[\frac{(\ell+1)(\ell-2)}{2}+2\right]k+\ell(\ell-2);
    \]
furthermore, fixing $\ell$, if $k$ is large enough, then we have
 \[
    S_2(k;\ell)= k^2+\left[\frac{(\ell+1)(\ell-2)}{2}+2\right]k+\ell(\ell-2).
 \]
In particular, we have $S_2(k;2)=k^2+2k$ for all $k\geq3$.
\end{theorem}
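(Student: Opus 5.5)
The lower bound will come from an explicit $2$-coloring of $\{1,\ldots,N-1\}$, where
\[
N=k^2+\left[\frac{(\ell+1)(\ell-2)}{2}+2\right]k+\ell(\ell-2)=k^2+\frac{\ell(\ell-1)}{2}k+k+\ell^2-2\ell .
\]
The key quantity is the smallest possible left-hand side of a solution with exactly $\ell$ distinct values among $x_1,\ldots,x_k$, all at least $a$. This minimum is attained by taking $a$ with multiplicity $k-\ell+1$ together with $a+1,\ldots,a+\ell-1$, and equals $ak+\ell(\ell-1)/2$. Symmetrically, the largest left-hand side using $\ell$ distinct values that are all at most $m$ is $mk-\ell(\ell-1)/2$.

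Set $m=k+\ell(\ell-1)/2-1$ and $M=(m+1)k+\ell(\ell-1)/2-1$. Color $[1,m]\cup[M+1,N-1]$ red and $[m+1,M]$ blue. The verification has three parts.
\begin{itemize}
\item A blue solution has left-hand side at least $(m+1)k+\ell(\ell-1)/2=M+1$, so its sum is not blue.
\item A red solution whose left-hand values all lie in $[1,m]$ has sum at least $k+\ell(\ell-1)/2>m$. Its sum is also at most $mk-\ell(\ell-1)/2$, which is at most $M$. So the sum is blue.
\item A red solution using some value $x\ge M+1$ on the left has its other $k-1$ entries taking at least $\ell-1$ distinct values. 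Its sum is therefore at least $M+1+(k-1)+(\ell-1)(\ell-2)/2$. A short computation shows this equals $N$, so it exceeds $N-1$.
\end{itemize}
Hence $S_2(k;\ell)\ge N$. I will double-check the edge cases $\ell=k$ and small $k$, since then the multiplicity $k-\ell+1$ can be $1$ and $m,M$ must still make sense. The inequalities above only use $k\ge\ell$, so I expect no real obstruction there.

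For the upper bound, fix $\ell$, take $k$ large, and suppose $\Delta$ is a $2$-coloring of $[1,N]$ with no monochromatic solution having exactly $\ell+1$ distinct entries. The goal is to show that $\Delta$ is forced to coincide with the coloring above on $[1,N-1]$, and then to derive a contradiction at $N$. Assume without loss of generality that $1$ is red.

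Step 1 (the initial red block). Using solutions built from small values, such as $1$ with multiplicity $k-\ell+1$ together with $2,\ldots,\ell$ (sum $k+\ell(\ell-1)/2$), and their shifts, I will show that $m+1=k+\ell(\ell-1)/2$ is blue and that every integer up to $m$ is red. The argument is by contradiction: if some small $b\le m$ were blue, I would play it against a red configuration. Here large $k$ gives lots of freedom in choosing multiplicities, so that each target sum can be realized in many ways.

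Step 2 (the blue block). Since $m+1$ is blue, adjusting multiplicities among consecutive blue values near $m+1$ should force the whole interval $[m+1,M]$ to be blue. Conversely, every red value in the range $[k+\ell(\ell-1)/2,\,mk-\ell(\ell-1)/2]$ is a sum of $\ell$ distinct red small values, so all of these values must be blue.

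Step 3 (the contradiction at the top). $M+1$ is a blue sum, since it equals $(m+1)k+\ell(\ell-1)/2$, so it must be red. Then $N=(M+1)+(k-1)+(\ell-1)(\ell-2)/2$ is a red sum and must be blue. Finally, I will exhibit a blue representation of $N$ with exactly $\ell$ distinct values in $[m+1,M]$. This last step requires checking that $N$ lies in the achievable range $[(m+1)k+\ell(\ell-1)/2,\;Mk-\ell(\ell-1)/2]$, which is clear for large $k$.

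The main obstacle is Step 1 (and the filling part of Step 2). There, one must show that every achievable sum in an interval can be realized with exactly $\ell$ distinct summands drawn from a prescribed monochromatic set, while the coloring is not yet known. I would first prove a lemma: if an interval $[a,a+\ell]$ is monochromatic, then every integer in $[ak+\ell(\ell-1)/2,\,(a+\ell)k-\ell(\ell-1)/2]$ is representable in this way. The proof is to shift multiplicities one unit at a time between consecutive values. The threshold on $k$ enters only through this lemma and the base case. For $\ell=2$, I would carry out the case analysis by hand to obtain the explicit bound $k\ge3$.
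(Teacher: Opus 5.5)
\textbf{Lower bound.} Your lower bound is correct and is exactly the paper's coloring. Your $m$ is the paper's $T=k+\frac{(\ell+1)(\ell-2)}{2}$, your $M$ equals $(k+1)T$, and your three checks, including $M+1+(k-1)+\frac{(\ell-1)(\ell-2)}{2}=N$, are those of Section 3. Your Step 3 is also essentially the paper's Case 1.

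\textbf{The gap is Step 1.} This is the real content of the upper bound, and you give no mechanism for it. Knowing only that $1$ is red, nothing local forces $2,\dots,\ell$ (let alone all of $[1,m]$) to be red. Consider even $k$ and the parity coloring (odd red, even blue). It has no monochromatic solution with exactly $\ell+1$ distinct entries on $[1,2k+\ell^2-\ell-1]$: a sum of $k$ odd numbers is even, and a sum of $k$ even numbers taking $\ell$ distinct values is at least $2k+\ell^2-\ell$. That interval contains $[1,m+\ell]$ and everything your Step 1 inspects. So ``playing a blue $b$ against a red configuration'' of small values cannot yield a contradiction. One has to go out to sums that are several multiples of $k$, and how one does so depends on the unknown color pattern of the small integers.

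Your interval lemma does not fill this hole, because it presupposes a monochromatic interval $[a,a+\ell]$, which is precisely what is not yet known. Step 2 also has the direction reversed in its first sentence: sums of blue values are forced red, not blue. Step 2 is unnecessary anyway, since Step 3 needs only $1,\dots,\ell$ red and a few blue integers just above $m$. Finally, for $\ell=2$, $k\ge 3$ you only promise a case analysis. The paper actually carries out a four-way split on the colors of $2$ and $3$.

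\textbf{What the paper does instead.} It does not try to determine the coloring. By pigeonhole, $\{1,\dots,2\ell-1\}$ contains $\ell$ integers of one color. Take the lexicographically least such tuple $u_1<\dots<u_\ell$, say red. Then every integer below $u_1$, or strictly between consecutive $u_i$, is blue. There are three cases.
\begin{itemize}
\item $u_i=i$: this is your Step 3.
\item $u_i=2i-1$: it is refuted by an explicit chain of forced colors through $k+\ell^2-\ell$, $3k+2\ell^2-4\ell$, $4k+3\ell^2-7\ell+1$ and $6k+4\ell^2-10\ell+1$.
\item Some consecutive red pair $u_j,u_j+1$ exists together with a blue $v\le\ell$: one builds $\ell-1$ consecutive blue integers $U+(k-\ell)u_j+t$ and two red integers $V$ and $V+U+(k-\ell)u_j-v$. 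One then writes the second as $V$ plus red summands, with an ad hoc treatment when $\ell=3$, $j=2$.
\end{itemize}
Every integer used is $O(k)$ for fixed $\ell$, while $N$ is quadratic in $k$, together with $k\ge 3\ell-1$ to allow the multiplicities. That is where ``$k$ large'' enters, not through a representation lemma. Note that your normalization ``$1$ is red'' does not remove any of these cases. For example, $1$ red with $2,\dots,\ell+1$ blue is the third case with the colors swapped, and it must be refuted directly.
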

We note that even though Theorem~\ref{Theorem:Main} is not true for $\ell=1$, the formula nevertheless matches the generalized Schur number $S_2(k)=k^2+k-1$ if we set $\ell=1$. In Section~\ref{Section:Conluding}, we note that when the definition of restricted generalized Schur numbers is relaxed to include all solutions $\mathcal{S}$ to (\ref{Equaiton:Main}) such that $c(\mathcal{S})\geq\ell+1$, a corollary of Theorem~\ref{Theorem:Main} contains the generalized Schur number $S_2(k)=k^2+k-1$ as a special case.

The rest of this paper is organized as follows. We first establish a result on the existence of $S_r(k;\ell)$ in Section~\ref{Section:Existence}. In Section~\ref{Section:Lower_Bound}, we prove the lower bound in Theorem~\ref{Theorem:Main}. We prove the upper bound in Theorem~\ref{Theorem:Main} for $\ell=2$ in Section~\ref{Section:Upper_Bound_2} and hence establish that $S_2(k;2)=k^2+2k$ for all $k\geq3$. The upper bound in Theorem~\ref{Theorem:Main} for $\ell\geq3$ is proved in Section~\ref{Section:Upper_Bound_>=3}. In Section~\ref{Section:Conluding}, we provide a lower bound for $S_3(k;2)$, discuss a relaxed version of restricted generalized Schur numbers, and pose two open questions.
\section{The Existence of $S_r(k;\ell)$}\label{Section:Existence} 
We start with a basic observation.
\begin{lemma}\label{Lemma:Disjunctive}
Let $A\subseteq\mathbb{N}$ be a finite set of positive integers, and let $k,\ell\in\mathbb{N}$ with $2\leq\ell\leq k$. Then the following are equivalent:
\begin{itemize}
\item[(a)] $A$ has a solution $\mathcal{S}$ to (\ref{Equaiton:Main}) such that $c(\mathcal{S})=\ell+1$;
\item[(b)] $A$ has a solution to a system
\[
\left\{
\begin{split}
&a_1x_1+a_2x_2+\cdots+a_\ell x_\ell=x_{\ell+1},\\
&x_1<x_2<\cdots<x_\ell
\end{split}\right.
\]
for some $a_1,a_2,\ldots,a_\ell\in\mathbb{N}$ such that $a_1+a_2+\cdots+a_\ell=k$.
\end{itemize}
\end{lemma}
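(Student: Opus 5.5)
The plan is to prove the two implications separately. In both directions the key fact is that in a solution of (\ref{Equaiton:Main}) in $\mathbb{N}$ with $k\geq2$, the right-hand side $x_{k+1}$ is strictly larger than each summand on the left. This is what makes the count $c(\mathcal{S})=\ell+1$ correspond exactly to $\ell$ distinct values on the left.

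\emph{(a) $\Rightarrow$ (b).} Let $\mathcal{S}=(y_1,\ldots,y_k,y_{k+1})$ be a solution in $A$ with $c(\mathcal{S})=\ell+1$.
\begin{itemize}
\item Since $k\geq2$ and all entries are positive, $y_{k+1}=y_1+\cdots+y_k>y_i$ for every $i\leq k$. So $y_{k+1}$ differs from every left-hand entry.
\item Hence the multiset $\{y_1,\ldots,y_k\}$ takes exactly $\ell$ distinct values. List them in increasing order as $z_1<z_2<\cdots<z_\ell$.
\item Let $a_j\geq1$ be the number of indices $i\leq k$ with $y_i=z_j$. Then $a_1+\cdots+a_\ell=k$ and $a_1z_1+\cdots+a_\ell z_\ell=y_{k+1}$.
\item Setting $x_j=z_j$ for $j\leq\ell$ and $x_{\ell+1}=y_{k+1}$ gives a solution of the system in (b) with all entries in $A$.
\end{itemize}

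\emph{(b) $\Rightarrow$ (a).} Suppose $x_1<\cdots<x_\ell$ and $x_{\ell+1}$ lie in $A$ and satisfy $\sum a_jx_j=x_{\ell+1}$, where each $a_j\in\mathbb{N}$ and $\sum a_j=k$.
\begin{itemize}
\item Build a $(k+1)$-tuple by assigning the value $x_j$ to $a_j$ of the left-hand variables, for each $j$. This is possible because $\sum a_j=k$.
\item Assign $x_{\ell+1}$ to the right-hand variable. The result is a solution of (\ref{Equaiton:Main}) in $A$.
\item Because every $a_j\geq1$, each of the $\ell$ values $x_1,\ldots,x_\ell$ actually appears on the left, and these values are distinct.
\item Finally, $x_{\ell+1}\neq x_j$ for every $j$. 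Indeed, $\ell\geq2$ gives $x_{\ell+1}\geq x_j+x_{j'}>x_j$ for any other index $j'$. (Equivalently, $k\geq2$ already forces this.) So $c(\mathcal{S})=\ell+1$.
\end{itemize}

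No step is a real obstacle. The only point needing care is checking that $x_{k+1}$ (respectively $x_{\ell+1}$) is distinct from all summands, and this is where $k\geq2$ and the positivity of the integers are used.
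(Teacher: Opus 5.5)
Your proof is correct and follows the same approach as the paper: group equal left-hand summands by multiplicity to obtain the coefficients $a_j$, and for the converse expand the weighted sum back into $k$ summands. You are more careful than the paper on one point: the paper simply takes the right-hand value to be the largest of the $\ell+1$ distinct integers, while you justify this from $k\geq2$ and positivity.
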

\begin{proof}
We first suppose (a) is true. Then there exists a subset $\{u_1,u_2,\ldots,u_{\ell+1}\}\subseteq A$ and $a_1,a_2,\ldots,a_\ell\in\mathbb{N}$ such that $u_1<u_2<\ldots<u_{\ell+1}$, $a_1+a_2+\cdots+a_\ell=k$, and
\begin{equation}\label{Equation:ForLemma}
\underbrace{u_1+u_1+\cdots+u_1}_{a_1\text{ times}}+\underbrace{u_2+u_2+\cdots+u_2}_{a_2\text{ times}}+\cdots+\underbrace{u_\ell+u_\ell+\cdots+u_\ell}_{a_\ell\text{ times}}=u_{\ell+1}.
\end{equation}
It follows that $a_1u_1+a_2u_2+\cdots+a_\ell u_\ell=u_{\ell+1}$ and hence (b) holds.

Now suppose (b) is true. Then there exists a subset $\{u_1,u_2,\ldots,u_{\ell+1}\}\subseteq A$ such that $u_1<u_2<\ldots<u_{\ell+1}$ and $a_1u_1+a_2u_2+\cdots+a_\ell u_\ell=u_{\ell+1}$ for some $a_1,a_2,\ldots,a_\ell\in\mathbb{N}$ with $a_1+a_2+\cdots+a_\ell=k$. Since $a_1u_1+a_2u_2+\cdots+a_\ell u_\ell=u_{\ell+1}$ is equivalent to (\ref{Equation:ForLemma}), we see that (a) holds.
\end{proof}

\begin{remark}
Johnson and Schaal \cite{JS2005} introduced the concept of $r$-color \textit{disjunctive Rado numbers}, which is defined as the smallest $n\in\mathbb{N}$, if exists, such that every $r$-coloring of $\{1,2,\ldots,n\}$ has a monochromatic solution to at least one equation in a set of equations. By Lemma~\ref{Lemma:Disjunctive}, $S_r(k;\ell)$, if exists, is the smallest $n\in\mathbb{N}$ such that every $r$-coloring of $\{1,2,\ldots,n\}$ has a monochromatic solution to at least one system in the following set:
\begin{equation}\label{Equation:Disjunctive}
\left\{
\begin{array}{ll}
a_1x_1+a_2x_2+\cdots+a_\ell x_\ell=x_{\ell+1},\\x_1<x_2<\cdots<x_\ell
\end{array}:a_1,a_2,\ldots,a_\ell\in\mathbb{N},a_1+a_2+\cdots+a_\ell=k\right\}.
\end{equation}
Hence, the restricted generalized Schur numbers $S_r(k;\ell)$ are disjunctive Rado numbers for (\ref{Equation:Disjunctive}). For more results on disjunctive Rado numbers, see \cite{DMT2022,DMT2024,DT2025,KS2003}.
\end{remark}

The following result is a direct consequence of a result of Hindman and Leader \cite[Theorem 2]{HL1998} (see also \cite[Theorem 2.2]{GGL2014}).

\begin{lemma}[\cite{HL1998}]\label{Lemma:StronglyRegular}
Let $k,a_1,a_2,\ldots,a_k\in\mathbb{N}$ with $k\geq2$. If every finite coloring of $\mathbb{N}$ has a monochromatic solution to the equation $a_1x_1+a_2x_2+\cdots+a_kx_k=x_{k+1}$, where $x_1,x_2,\ldots,x_k$ are not necessarily distinct, then every finite coloring of $\mathbb{N}$ has a monochromatic solution to the system $a_1x_1+a_2x_2+\cdots+a_kx_k=x_{k+1}$ and $x_1<x_2<\cdots<x_k$.
\end{lemma}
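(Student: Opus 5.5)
The lemma is essentially a citation, so the plan is to reduce it to the form in which Hindman and Leader state their theorem and check that the hypotheses match. \textbf{Step 1.} Translate the hypothesis into Rado's columns condition. Write the equation as $a_1x_1+\cdots+a_kx_k-x_{k+1}=0$. By Rado's theorem it is partition regular over $\mathbb{N}$ exactly when some nonempty set of coefficients sums to $0$. All $a_i$ are positive, so such a set must contain $-1$ together with exactly one $a_j$, and that $a_j$ must equal $1$. The hypothesis is therefore equivalent to: $a_j=1$ for some $j\in\{1,\ldots,k\}$. This also shows that $x_j$ and $x_{k+1}$ are the two "pivot" variables, with $x_{k+1}-x_j=\sum_{i\neq j}a_ix_i$.

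\textbf{Step 2.} Encode the solutions via a Deuber-type structure. I would recast solutions as images of a linear map in new free variables $y_1,\ldots,y_m$. For $i\neq j$, set $x_i$ to be a suitable multiple $c\,y_{\sigma(i)}$. Then let $x_j$ and $x_{k+1}=x_j+\sum_{i\neq j}a_ic\,y_{\sigma(i)}$ be affine combinations whose leading coefficient is $c$ and whose other coefficients are bounded by some $p$. Every such vector then lies in an $(m,p,c)$-set. By Deuber's theorem every finite coloring of $\mathbb{N}$ contains a monochromatic $(m,p,c)$-set, and this is precisely the form of the Hindman--Leader result for image partition regular matrices. Their Theorem 2 states that one may additionally require the entries to be distinct, and moreover that the generators may be chosen growing as fast as desired (for instance $y_{s+1}>N y_s$ for a prescribed $N$). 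That freedom is what turns distinctness into a prescribed order.

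\textbf{Step 3 (main obstacle): the ordering $x_1<\cdots<x_k$.} Distinctness is not enough, because permuting variables changes which coefficient sits on which variable. Here is what I would try first. Index the $y$'s so that the variable appearing in position $i$ is dominated by a generator of larger size than those in positions below $i$. Then choose $\sigma$ and the affine form of $x_j$ so that $x_j$ contains the generator of rank $j$ with coefficient $c$, plus lower-order terms of absolute value at most $p$ times smaller generators. Rapid growth of the generators then forces $x_1<\cdots<x_k$, and keeps $x_{k+1}$ positive and largest.

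If the leading-coefficient convention of $(m,p,c)$-sets runs in the wrong direction for this, I would reverse the indexing. Alternatively I would invoke \cite[Theorem 2]{HL1998} in its form for systems, adding auxiliary equations $x_{i+1}=x_i+z_i$ with new variables $z_i$. This expresses the order constraints as extra linear equations, and I would check via the columns condition that the augmented system remains partition regular.
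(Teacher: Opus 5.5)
Your Step~3 cannot be completed, and the obstruction is not an indexing convention. Read literally, with $a_1,\dots,a_k$ attached to $x_1<\dots<x_k$ in that order, the lemma is false whenever $a_k\ge 2$. Take $k=2$, $a_1=1$, $a_2=2$. By your Step~1, $x_1+2x_2=x_3$ is partition regular. But if $1\le x_1<x_2$, then $2x_2<x_3<3x_2$, so $\lfloor\log_2 x_3\rfloor-\lfloor\log_2 x_2\rfloor\in\{1,2\}$. Hence the $3$-coloring $n\mapsto\lfloor\log_2 n\rfloor\bmod 3$ always gives $x_2$ and $x_3$ different colors. In general, if $a_k\ge 2$ and $x_1<\dots<x_k$, then $2x_k<x_{k+1}<(a_1+\dots+a_k)x_k$. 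Coloring by $\lfloor\log_2 n\rfloor$ modulo any integer $r>1+\log_2(a_1+\dots+a_k)$ then leaves no monochromatic solution.

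Step~1 only gives \emph{some} $j$ with $a_j=1$, and that is exactly where Step~3 breaks. The element $x_k$ carries the top generator with coefficient $c$, while $x_j$ (for $j<k$) involves only smaller generators. So $x_{k+1}=x_j+\sum_{i\ne j}a_ix_i$ has coefficient $a_kc$ on the top generator. Under the fast growth you invoke, representations with bounded coefficients are unique, so $x_{k+1}$ lies outside the $(m,p,c)$-set spanned by your generators unless $a_k=1$. Reversing the indexing does not change which generator dominates, and extra growth does not help. Your fallback also fails: the augmented system with $x_{i+1}=x_i+z_i$ is itself not partition regular, so the columns-condition check would simply fail.

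What your method does prove is either of two corrected statements.

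(i) If $a_k=1$, set $x_i=c(y_1+\dots+y_i)$ for $1\le i\le k$ and $x_{k+1}=x_k+\sum_{i<k}a_ix_i$. The coefficient of $y_k$ in $x_{k+1}$ is $c$, and every lower coefficient is at most $c(a_1+\dots+a_k)$. So all these elements lie in the $(k,p,c)$-set generated by $y_1,\dots,y_k$ (leading coefficient on the last generator) once $p\ge c(a_1+\dots+a_k)$. Positivity of the generators already forces $x_1<\dots<x_k<x_{k+1}$, and Deuber's theorem finishes with no growth condition.

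(ii) For an arbitrary pivot $j$, give $x_j$ the top generator and the remaining $x_i$ distinct lower ranks in the same way. This yields monochromatic solutions with $x_1,\dots,x_k$ pairwise distinct, which is the correct general conclusion.

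The paper itself offers only the citation \cite{HL1998}, and its existence theorem applies the lemma with $a_1=1$. That application can be rescued in either of two ways: place the coefficient $1$ on $x_\ell$ instead, or use (ii) together with Lemma~\ref{Lemma:Disjunctive}, which permits permuting the coefficients after sorting the $x_i$.
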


Now we prove a result on the existence of restricted generalized Schur numbers $S_r(k;\ell)$.

\begin{theorem}
Let $k,\ell,r\in\mathbb{N}$ with $\ell\leq k$ and $k\geq2$. Then $S_r(k;\ell)$ exists if and only if $\ell\geq2$ or $r=1$.
\end{theorem}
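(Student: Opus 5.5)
We handle the two directions separately. For the ``if'' direction, split into the case $r=1$ and the case $\ell\geq2$.

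\emph{Case $r=1$.} Any solution with exactly $\ell+1$ distinct values will do; an explicit one is
\[
a_1=\cdots=a_{\ell-1}=1,\qquad a_\ell=k-\ell+1,\qquad x_i=i \text{ for } i\leq\ell,\qquad x_{\ell+1}=\sum_i a_i x_i .
\]
Since $x_{\ell+1}\geq 1+\cdots+\ell>\ell$ when $\ell\geq2$, the $\ell+1$ values are distinct, so by Lemma~\ref{Lemma:Disjunctive} this gives a solution $\mathcal{S}$ with $c(\mathcal{S})=\ell+1$ inside $\{1,\dots,x_{\ell+1}\}$. When $\ell=1$ the solution $x_1=1$, $x_2=k$ works, because $k\geq2$ forces $x_2\neq x_1$.

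\emph{Case $\ell\geq2$, $r$ arbitrary.}
\begin{itemize}
\item Fix the coefficients $a_1=\cdots=a_\ell=\ldots$; concretely, take $a_1=\cdots=a_{\ell-1}=1$ and $a_\ell=k-\ell+1$, which sum to $k$.
\item The equation $a_1x_1+\cdots+a_\ell x_\ell=x_{\ell+1}$ satisfies Rado's columns condition: the coefficient vector is $(a_1,\dots,a_\ell,-1)$, and the subset $\{a_1,-1\}$ with $a_1=1$ sums to $0$. Hence every finite coloring of $\mathbb{N}$ has a monochromatic solution, with the $x_i$ not necessarily distinct.
\item Lemma~\ref{Lemma:StronglyRegular} then upgrades this to a monochromatic solution with $x_1<\cdots<x_\ell$. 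Since $\ell\geq2$, we get $x_{\ell+1}>x_\ell$, so $c(\mathcal{S})=\ell+1$ by Lemma~\ref{Lemma:Disjunctive}.
\item This holds for every $r$-coloring of $\mathbb{N}$. A standard compactness argument (K\"onig's lemma over colorings of $\{1,\dots,n\}$) then yields a finite $n$ that works.
\end{itemize}
So $S_r(k;\ell)$ exists.

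For the ``only if'' direction, suppose $\ell=1$ and $r\geq2$, and exhibit a single 2-coloring of all of $\mathbb{N}$ with no monochromatic solution. With $\ell=1$ the only system is $kx_1=x_2$. Color $n$ by the parity of $v_k(n)$, the exponent of the largest power of $k$ dividing $n$. Then $v_k(kx)=v_k(x)+1$, so $x$ and $kx$ always receive different colors. Restricting this coloring to $\{1,\dots,n\}$ for any $n$ shows that no finite $n$ works, so $S_r(k;1)$ does not exist for $r\geq2$ (additional colors may simply go unused).

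The only step needing care is checking that Rado's theorem applies in the form Lemma~\ref{Lemma:StronglyRegular} requires, namely that the single equation is partition regular. The columns condition with $a_1=1$ settles this directly. Alternatively, one could cite $S_r(k)$-type results for the equation $x_1+\cdots+x_k=x_{k+1}$ after substituting repeated variables.
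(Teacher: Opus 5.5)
Your proof is correct and follows the same route as the paper. For $\ell\geq2$ it uses Rado's criterion (which you verify explicitly via $a_1=1$), the Hindman--Leader upgrade of Lemma~\ref{Lemma:StronglyRegular}, Lemma~\ref{Lemma:Disjunctive}, and compactness, and for $\ell=1$ it gives a 2-coloring of $\mathbb{N}$ in which $x$ and $kx$ always get different colors.

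The only difference is that you color by the parity of the $k$-adic valuation, whereas the paper alternates colors on the blocks $\{k^{i-1},\ldots,k^i-1\}$. Both work, since $v_k(kx)=v_k(x)+1$, and $k^{i-1}\le a<k^i$ implies $k^i\le ka<k^{i+1}$. Your closing aside, about getting regularity from $x_1+\cdots+x_k=x_{k+1}$ by substituting repeated variables, runs in the wrong direction, but nothing in your argument depends on it.
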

\begin{proof}
It is easy to see that $S_r(k;\ell)$ exists when $r=1$. So we assume that $r\geq2$. 

We first show that $S_r(k;1)$ does not exist for any $r\geq2$. We do so by constructing a $2$-coloring of $\mathbb{N}$ that does not have a monochromatic solution $\mathcal{S}$ to (\ref{Equaiton:Main}) with $c(\mathcal{S})=2$. For all $i\in\mathbb{N}$, let
\[
A_i=\{k^{i-1},k^{i-1}+1,\ldots,k^i-1\}.
\]
Let $\Delta:\mathbb{N}\to\{R,B\}$ be a $2$-coloring such that $\Delta(a)=R$ for all $a\in A_i$ with $i$ even and $\Delta(a)=B$ for all $a\in A_i$ with $i$ odd. Let $a\in A_i$ for some $i\in\mathbb{N}$. Then we have $k^{i-1}\leq a<k^i$ and hence
\[
k^i\leq \underbrace{a+a+\cdots+a}_{k\text{ times}}=ka<k^{i+1}.
\]
It follows that $ka\in A_{i+1}$. Since the integers in $A_i$ and $A_{i+1}$ have different colors, $\Delta$ does not have a monochromatic solution $\mathcal{S}$ to (\ref{Equaiton:Main}) with $c(\mathcal{S})=2$. This proves that $S_2(k;1)$ does not exist. Since a $2$-coloring is also an $r$-coloring for any $r\geq2$, we see that $S_r(k;1)$ does not exist for any $r\geq2$.

Now we show that $S_r(k;\ell)$ exists for all $k,\ell,r\in\mathbb{N}$ with $2\leq\ell\leq k$ and $r\geq2$. Let $a_1=1$ and $a_2,a_3,\ldots,a_\ell\in\mathbb{N}$ such that $1+a_2+a_3+\cdots+a_{\ell}=k$. By Rado's single equation theorem (see \cite[Theorem 9.5]{LR2014}), every $r$-coloring of $\mathbb{N}$ has a monochromatic solution to
\begin{equation}\label{Equation:Secondary}
x_1+a_2x_2+a_3x_3+\cdots+a_{\ell}x_\ell=x_{\ell+1},
\end{equation}
where $x_1,x_2,\ldots,x_\ell$ are not necessarily distinct. By Lemma~\ref{Lemma:StronglyRegular}, every $r$-coloring of $\mathbb{N}$ has a monochromatic solution to (\ref{Equation:Secondary}) with $x_1<x_2<\cdots<x_{\ell}$. Since $1+a_2+a_3+\cdots+a_{\ell}=k$, by Lemma~\ref{Lemma:Disjunctive}, every $r$-coloring of $\mathbb{N}$ has a monochromatic solution $\mathcal{S}$ to (\ref{Equaiton:Main}) such that $c(\mathcal{S})=\ell+1$. By the compactness principle (see \cite[Theorem 2.4]{LR2014}), we see that $S_r(k;\ell)$ exists.
\end{proof}
\section{Proof of Theorem~\ref{Theorem:Main}: Lower Bound}\label{Section:Lower_Bound}
Let $k,\ell\in\mathbb{N}$ with $2\leq\ell\leq k$. Write
\[
T:=1+2+\cdots+\ell+\underbrace{1+1+\cdots+1}_{k-\ell\text{ times}}-1=k+\frac{(\ell+1)(\ell-2)}{2},
\]
and 
\[
T':=k^2+\left[\frac{(\ell+1)(\ell-2)}{2}+2\right]k+\ell(\ell-2).
\]
Notice that we have
\[
T'=[(k+1)T+1]+\underbrace{1+1+\cdots+1}_{k-\ell\text{ times}}+1+2+\cdots+(\ell-1)=k(T+2)+\ell(\ell-2).
\]
We need to show that $S_r(k;\ell)\geq T'$. Let 
\[
A_R'=\left\{1,2,\ldots,T\right\},
\]
\[
A_R''=\left\{(k+1)T+1,(k+1)T+2,\ldots,T'-1
\right\},
\]
and
\[
A_B=\{T+1,T+2,\ldots,(k+1)T\}.
\]

Let $\Delta:\left\{1,2,\ldots,T'-1\right\}\to\{R,B\}$ be a $2$-coloring such that $\Delta(a)=R$ for all $a\in A_R'\cup A_R''$ and $\Delta(a)=B$ for all $a\in A_B$. We will show that $\Delta$ does not have a monochromatic solution $\mathcal{S}$ to (\ref{Equaiton:Main}) such that $c(S)=\ell+1$. It suffices to show that neither $A_R'\cup A_R''$ nor $A_B$ has a solution $\mathcal{S}$ to (\ref{Equaiton:Main}) such that $c(\mathcal{S})=\ell+1$. 

Let $a_1,a_2,\ldots,a_{\ell}\in A_R$ such that $a_1<a_2<\cdots<a_{\ell}$, and $t_1,t_2,\ldots,t_{\ell}\in\mathbb{N}$ such that $t_1+t_2+\cdots+t_\ell=k$. If $a_\ell\in A_R'$, then we have
\[
\begin{split}
\underbrace{a_1+a_1+\cdots+a_1}_{t_1\text{ times}}+\underbrace{a_2+a_2+\cdots+a_2}_{t_2\text{ times}}+\cdots+\underbrace{a_\ell+a_\ell+\cdots+a_\ell}_{t_\ell\text{ times}}<ka_\ell\leq kT<(k+1)T
\end{split}
\]
and
\[
\begin{split}
&\underbrace{a_1+a_1+\cdots+a_1}_{t_1\text{ times}}+\underbrace{a_2+a_2+\cdots+a_2}_{t_2\text{ times}}+\cdots+\underbrace{a_\ell+a_\ell+\cdots+a_\ell}_{t_\ell\text{ times}}\\\geq&\underbrace{1+1+\cdots+1}_{k-\ell\text{ times}}+1+2+\cdots+\ell=T+1,
\end{split}
\]
and hence $a_1+a_2+\cdots+a_{\ell}\in A_B$. If $a_\ell\in A_R''$, then we have
\[
\begin{split}
&\underbrace{a_1+a_1+\cdots+a_1}_{t_1\text{ times}}+\underbrace{a_2+a_2+\cdots+a_2}_{t_2\text{ times}}+\cdots+\underbrace{a_\ell+a_\ell+\cdots+a_\ell}_{t_\ell\text{ times}}\\\geq&\underbrace{1+1+\cdots+1}_{k-\ell\text{ times}}+1+2+\cdots+(\ell-1)+[(k+1)T+1]=T'\notin A_R'\cup A_R''.
\end{split}
\]
Hence $A_R'\cup A_R''$ does not have a solution $\mathcal{S}$ to (\ref{Equaiton:Main}) such that $c(\mathcal{S})=\ell+1$. 

Let $b_1,b_2,\ldots,b_{\ell}\in A_B$ such that $b_1<b_2<\cdots<b_{\ell}$, and $t_1,t_2,\ldots,t_{\ell}\in\mathbb{N}$ such that $t_1+t_2+\cdots+t_\ell=k$. Then we have
\[
\begin{split}
&\underbrace{b_1+b_1+\cdots+b_1}_{t_1\text{ times}}+\underbrace{b_2+b_2+\cdots+b_2}_{t_2\text{ times}}+\cdots+\underbrace{b_\ell+b_\ell+\cdots+b_\ell}_{t_\ell\text{ times}}\\\geq&\underbrace{(T+1)+(T+1)+\cdots+(T+1)}_{k-\ell\text{ times}}+(T+1)+(T+2)+\cdots+(T+\ell)=(k+1)T+1\notin A_B.
\end{split}
\]
Hence $A_B$ does not have a solution $\mathcal{S}$ to (\ref{Equaiton:Main}) such that $c(\mathcal{S})=\ell+1$. This completes the proof that $S_2(k;\ell)\geq T'$.
\section{Proof of Theorem~\ref{Theorem:Main}: Upper Bound When $\ell=2$}\label{Section:Upper_Bound_2}
In this section, we show that $S_2(k;2)\leq k^2+2k$ for all $k\geq3$. Combining this upper bound with the lower bound in Theorem~\ref{Theorem:Main}, we then have $S_2(k;2)=k^2+2k$ for all $k\geq3$. Since $S_2(2;2)=S_2(2)=9$ (see \cite[p.~349]{ABRS2023}), we have a complete answer to $S_2(k;2)$.

We note that there are two reasons to prove the upper bound in Theorem~\ref{Theorem:Main} for the case $\ell=2$ independently. One reason is to get a better threshold for $k$ such that $S_2(k;2)=k^2+2k$; and the other reason is that the proof strategy for the case when $\ell\geq3$ (more specifically, see Case 1 and Case 3 in Section~\ref{Section:Upper_Bound_>=3}) does not apply to the case when $\ell=2$.

Let $\Delta:\{1,2,\ldots,k^2+2k\}\to\{R,B\}$ be a $2$-coloring. Suppose, by way of contradiction, that $\Delta$ does not have a monochromatic solution $\mathcal{S}$ to (\ref{Equaiton:Main}) such that $c(\mathcal{S})=3$. Without loss of generality, we assume that $\Delta(1)=R$. There are four cases depending on the colors assigned to the integers $2$ and $3$.

\textbf{Case 1}: $\Delta(1)=\Delta(2)=\Delta(3)=R$. Since
\[
\underbrace{1+1+\cdots+1}_{k-1\text{ times}}+2=k+1,
\]
\[
\underbrace{1+1+\cdots+1}_{k-2\text{ times}}+2+2=k+2,
\]
and
\[
\underbrace{2+2+\cdots+2}_{k-1\text{ times}}+3=2k+1,
\]
we have $\Delta(k+1)=\Delta(k+2)=\Delta(2k+1)=B$. Since
\[
\underbrace{(k+1)+(k+1)+\cdots+(k+1)}_{k-1\text{ times}}+(k+2)=k^2+k+1,
\]
we have $\Delta(k^2+k+1)=R$. Since
\[
\underbrace{1+1+\cdots+1}_{k-1\text{ times}}+(k^2+k+1)=k^2+2k,
\]
we have $\Delta(k^2+2k)=B$. Since
\[
\underbrace{(k+1)+(k+1)+\cdots+(k+1)}_{k-1\text{ times}}+(2k+1)=k^2+2k,
\]
we have a monochromatic solution $\mathcal{S}$ to (\ref{Equaiton:Main}) such that $c(S)=3$, which is a contradiction.

\textbf{Case 2}: $\Delta(1)=\Delta(2)=R$ and $\Delta(3)=B$. Similar to Case 1, we have $\Delta(k+1)=\Delta(k+2)=\Delta(k^2+2k)=B$ and $\Delta(k^2+k+1)=R$. Since
\[
\underbrace{3+3+\cdots+3}_{k-1\text{ times}}+(k+2)=4k-1,
\]
we have $\Delta(4k-1)=R$. Since 
\[
\underbrace{2+2+\cdots+2}_{k-1\text{ times}}+(2k+1)=4k-1,
\]
we have $\Delta(2k+1)=B$. Now, similar to Case 1, we have a monochromatic solution $\mathcal{S}$ to (\ref{Equaiton:Main}) such that $c(\mathcal{S})=3$, which is a contradiction. We note that $k^2+2k\geq4k-1$ for all $k\geq3$.

\textbf{Case 3}: $\Delta(1)=R$ and $\Delta(2)=\Delta(3)=B$. Since 
\[
\underbrace{2+2+\cdots+2}_{k-1\text{ times}}+3=2k+1
\]
and
\[
2+\underbrace{3+3+\cdots+3}_{k-1\text{ times}}=3k-1,
\]
we have $\Delta(2k+1)=\Delta(3k-1)=R$. Since
\[
\underbrace{1+1+\cdots+1}_{k-1\text{ times}}+(2k+1)=3k,
\]
we have $\Delta(3k)=B$. Since
\[
\underbrace{2+2+\cdots+2}_{k-1\text{ times}}+3k=5k-2,
\]
we have $\Delta(5k-2)=R$. Since
\[
\underbrace{1+1+\cdots+1}_{k-2\text{ times}}+2k+2k=5k-2
\]
and
\[
\underbrace{1+1+\cdots+1}_{k-1\text{ times}}+(3k-1)=4k-2,
\]
we have $\Delta(2k)=\Delta(4k-2)=B$. Since
\[
\underbrace{2+2+\cdots+2}_{k-1\text{ times}}+2k=4k-2,
\]
we have a monochromatic solution $\mathcal{S}$ to (\ref{Equaiton:Main}) such that $c(\mathcal{S})=3$, which is a contradiction. We note that $k^2+2k\geq5k-2$ for all $k\geq3$.

\textbf{Case 4}: $\Delta(1)=\Delta(3)=R$ and $\Delta(2)=B$. Since
\[
\underbrace{1+1+\cdots+1}_{k-1\text{ times}}+3=k+2
\]
and
\[
1+\underbrace{3+3+\cdots+3}_{k-1\text{ times}}=3k-2,
\]
we have $\Delta(k+2)=\Delta(3k-2)=B$. Since
\[
\underbrace{2+2+\cdots+2}_{k-1\text{ times}}+(k+2)=3k,
\]
we have $\Delta(3k)=R$. Since
\[
\underbrace{1+1+\cdots+1}_{k-1\text{ times}}+3k=4k-1,
\]
we have $\Delta(4k-1)=B$. Since
\[
\underbrace{2+2+\cdots+2}_{k-1\text{ times}}+(4k-1)=6k-3,
\]
we have $\Delta(6k-3)=R$. Since
\[
\underbrace{3+3+\cdots+3}_{k-1\text{ times}}+3k=6k-3,
\]
we have a monochromatic solution $\mathcal{S}$ to (\ref{Equaiton:Main}) such that $c(S)=3$, which is a contradiction. We note that $k^2+2k\geq6k-3$ for all $k\geq3$.

Since $k\geq3$, all the integers used in the identities are in $\{1,2,\ldots,k^2+2k\}$; and it is routine to check that all the identities involve two distinct integers on the left hand side and hence all the identities in the proof are valid. This completes the proof that $S_2(k;2)\leq k^2+2k$ for all $k\geq3$.
\section{Proof of Theorem~\ref{Theorem:Main}: Upper Bound When $\ell\geq3$}\label{Section:Upper_Bound_>=3}
Let $\ell\geq3$. Similar to Section~\ref{Section:Lower_Bound}, we write
\[
T:=1+2+\cdots+\ell+\underbrace{1+1+\cdots+1}_{k-\ell\text{ times}}-1=k+\frac{(\ell+1)(\ell-2)}{2},
\]
and 
\[
T':=k^2+\left[\frac{(\ell+1)(\ell-2)}{2}+2\right]k+\ell(\ell-2).
\]
We will prove that $S_2(k,\ell)\leq T'$ whenever $k\geq K$ for some large enough $K$, which will be determined later. For the time being, to ease the proof, we at least require that $K\geq3\ell-1$. At the end of the proof, we will see that $K$ might need to be larger.

Let $\Delta:\{1,2,\ldots,T'\}\to\{R,B\}$ be a $2$-coloring. By the pigeonhole principle, there exist at least $\ell$ integers in $\{1,2,\ldots,2\ell-1\}$ with the same color. We select the ``smallest" $u_1,u_2,\ldots,u_{\ell}\in \{1,2,\ldots,2\ell-1\}$ such that $u_1<u_2<\cdots<u_\ell$ and $\Delta(u_1)=\Delta(u_2)=\cdots=\Delta(u_\ell)$ using the \textit{lexicographic order} \cite[p.~5]{EG2021}; that is, under the lexicographic order, we have $u_1u_2\ldots u_\ell\leq w_1w_2\ldots w_\ell$ for all $\ell$ integers $w_1,w_2,\ldots,w_\ell\in\{1,2,\ldots,2\ell-1\}$ with $w_1<w_2<\cdots<w_\ell$ and $\Delta(w_1)=\Delta(w_2)=\cdots=\Delta(w_\ell)$. By doing so, if there exists $v\in\{1,2,\ldots,2\ell-1\}$ such that $u_i<v<u_{i+1}$ for some $i\in\{1,2,\ldots,\ell-1\}$, then we have $\Delta(v)\neq\Delta(u_i)=\Delta(u_{i+1})$; and if $v<u_1$, then $\Delta(v)\neq\Delta(u_1)$. This is because otherwise we wouldn't select $u_1,u_2,\ldots,u_\ell$ under the lexicographic order. Without loss of generality, we assume that $\Delta(u_1)=\Delta(u_2)=\cdots=\Delta(u_\ell)=R$.

\textbf{Case 1}: $u_i=i$ for all $i\in\{1,2,\ldots,\ell\}$. So we have $\Delta(1)=\Delta(2)=\cdots=\Delta(\ell)=R$. Since we require $K\geq3\ell-1$, we have $k\geq3\ell-2$, $k>\ell$, and $k>3$ for all $k\geq K$. For all $j\in\{0,1,2,\ldots,2\ell-2\}$, since
\[
1+2+\cdots+\ell+\underbrace{1+1+\cdots+1}_{k-\ell-j\text{ times}}+\underbrace{2+2+\cdots+2}_{j\text{ times}}=T+j+1,
\]
we have $\Delta\left(T+j+1\right)=B$. Since
\[
(T+1)+(T+2)+\cdots+(T+\ell)+\underbrace{(T+1)+(T+1)+\cdots+(T+1)}_{k-\ell\text{ times}}=(k+1)T+1,
\]
we have $\Delta\left((k+1)T+1\right)=R$. When $\ell=3$, we have
\[
(T+1)+(T+4)+(T+4)+\underbrace{(T+2)+(T+2)+\cdots+(T+2)}_{k-3\text{ times}}=T';
\]
and when $\ell\geq4$, we have
\[
(T+1)+(T+4)+(T+4)+(T+7)+(T+9)\cdots+(T+2\ell-1)+\underbrace{(T+2)+(T+2)+\cdots+(T+2)}_{k-\ell\text{ times}}=T'.
\]
It follows that $\Delta(T')=R$. Since
\[
1+2+\cdots+(\ell-1)+[(k+1)T+1]+\underbrace{1+1+\cdots+1}_{k-\ell\text{ times}}=T',
\]
we have a monochromatic solution $\mathcal{S}$ to (\ref{Equaiton:Main}) with $c(\mathcal{S})=\ell+1$, which is a contradiction. 

\textbf{Case 2}: $u_i=2i-1$ for all $i\in\{1,2,\ldots,\ell\}$. So we have $\Delta(1)=\Delta(3)=\cdots=\Delta(2\ell-1)=R$ and $\Delta(2)=\Delta(4)=\cdots=\Delta(2\ell-2)=B$. Since we require $K\geq3\ell-1$, we have $k\geq2\ell-1$ for all $k\geq K$. Since
\[
1+3+\cdots+(2\ell-1)+\underbrace{1+1+\cdots+1}_{k-\ell\text{ times}}=k+\ell^2-\ell,
\]
we have $\Delta(k+\ell^2-\ell)=B$. Since
\[
2+4+\cdots+(2\ell-2)+\underbrace{2+2+\cdots+2}_{k-\ell\text{ times}}+(k+\ell^2-\ell)=3k+2\ell^2-4\ell,
\]
we have $\Delta(3k+2\ell^2-4\ell)=R$. Since
\[
1+3+\cdots+(2\ell-3)+\underbrace{1+1+\cdots+1}_{k-\ell\text{ times}}+(3k+2\ell^2-4\ell)=4k+3\ell^2-7\ell+1,
\]
we have $\Delta(4k+3\ell^2-7\ell+1)=B$. Since
\[
2+4+\cdots+(2\ell-2)+\underbrace{2+2+\cdots+2}_{k-\ell\text{ times}}+(4k+3\ell^2-7\ell+1)=6k+4\ell^2-10\ell+1,
\]
we have $\Delta(6k+4\ell^2-10\ell+1)=R$. Since
\[
3+5+\cdots+(2\ell-1)+3+5+\cdots+(2\ell-1)+\underbrace{3+3+\cdots+3}_{k-\ell-(\ell-1)\text{ times}}+(3k+2\ell^2-4\ell)=6k+4\ell^2-10\ell+1,
\]
we have a monochromatic solution $\mathcal{S}$ to (\ref{Equaiton:Main}) with $c(S)=\ell+1$, which is a contradiction. We note that, since $\ell\geq3$, the largest integer used in the identities for Case 2 is $6k+4\ell^2-10\ell+1$. 

\textbf{Case 3}: There exists some $i\in\{1,2,\ldots,\ell-1\}$ such that $u_{i+1}=u_{i}+1$, and there exists some $v\in\{1,2,\ldots,\ell\}$ such that $\Delta(v)=B$. Let $j$ be the smallest such that $u_{j+1}=u_{j}+1$ and $v$ be the smallest such that $\Delta(v)=B$. Notice that we have $u_{\ell}>v$. Write $U:=u_1+u_2+\cdots+u_\ell$. Since we require $K\geq3\ell-1$, we have $k>\ell$ and $k\geq2\ell-2$ for all $k\geq K$.

For all $t\in\{0,1,\ldots,\ell-2\}$, since
\[
u_1+u_2+\cdots+u_{\ell}+\underbrace{u_j+u_j+\cdots+u_j}_{k-\ell-t\text{ times}}+\underbrace{u_{j+1}+u_{j+1}+\cdots+u_{j+1}}_{t\text{ times}}=U+(k-\ell)u_j+t,
\]
we have $\Delta(U+(k-\ell)u_j+t)=B$. Write
\[
V:=(k-\ell+1)v+(\ell-1)U+(\ell-1)(k-\ell)u_j+\frac{(\ell-1)(\ell-2)}{2}.
\]
Since
\[
\underbrace{v+v+\cdots+v}_{k-(\ell-1)\text{ times}}+[U+(k-\ell)u_j]+[U+(k-\ell)u_j+1]+\cdots+[U+(k-\ell)u_j
+\ell-2]=V
\]
and
\[
\begin{split}
&\underbrace{v+v+\cdots+v}_{k-\ell\text{ times}}+[U+(k-\ell)u_j]\\&+[U+(k-\ell)u_j]+[U+(k-\ell)u_j+1]+\cdots+[U+(k-\ell)u_j+\ell-2]\\=&V+U+(k-\ell)u_j-v,
\end{split}
\]
we have $\Delta(V)=\Delta(V+U+(k-\ell)u_j-v)=R$.

\textit{Subcase 3.1}: $j\leq\ell-2$. Notice that $u_\ell-v\leq (2\ell-1)-1=2\ell-2$ and, since we require that $K\geq3\ell-1$, we have $k\geq\ell+u_\ell-v$ for all $k\geq K$. Since
\[
u_1+u_2+\cdots+u_{\ell-1}+\underbrace{u_j+u_j+\cdots+u_j}_{k-\ell-(u_{\ell}-v)\text{ times}}+\underbrace{u_{j+1}+u_{j+1}+\cdots+u_{j+1}}_{u_{\ell}-v\text{ times}}+V=V+U+(k-\ell)u_j-v,
\]
we have a monochromatic solution $\mathcal{S}$ to (\ref{Equaiton:Main}) with $c(\mathcal{S})=\ell+1$, which is a contradiction.

\textit{Subcase 3.2}: $\ell\geq4$ and $j=\ell-1\geq3$. In this case, we have $v=1$ or $2$ and hence $u_2>v$. Similar to Subcase 3.1, since we require $K\geq3\ell-1$, we have $k\geq\ell+u_2-v$ for all $k\geq K$. Since 
\[
u_1+u_3+u_4+\cdots+u_{\ell}+\underbrace{u_j+u_j+\cdots+u_j}_{k-\ell-(u_{2}-v)\text{ times}}+\underbrace{u_{j+1}+u_{j+1}+\cdots+u_{j+1}}_{u_{2}-v\text{ times}}+V=V+U+(k-\ell)u_j-v,
\]
we have a monochromatic solution $\mathcal{S}$ to (\ref{Equaiton:Main}) with $c(\mathcal{S})=\ell+1$, which is a contradiction.

\textit{Subcase 3.3}: $\ell=3$ and $j=2$. Then we have $(u_1,u_2,u_3)\in\{(2,4,5),(1,4,5),(1,3,4)\}$. We will verify that each scenario leads to a contradiction. Notice that since $\ell=3$ and we require $K\geq3\ell-1$, we have $k\geq8$ for all $k\geq K$.

\textsl{Subsubcase 3.3.1}: $(u_1,u_2,u_3)=(2,4,5)$. Then $v=1$ and hence we have
\[
u_2+u_3+\underbrace{u_2+u_2+\cdots+u_2}_{k-4\text{ times}}+u_3+V=V+U+(k-3)u_j-v.
\]
It follows that we have a monochromatic solution $\mathcal{S}$ to (\ref{Equaiton:Main}) with $c(\mathcal{S})=4$, which is a contradiction.

\textsl{Subsubcase 3.3.2}: $(u_1,u_2,u_3)=(1,4,5)$. Then we have $\Delta(1)=\Delta(4)=\Delta(5)=R$ and $\Delta(2)=\Delta(3)=B$. Since
\[
\underbrace{1+1+\cdots+1}_{k-2\text{ times}}+4+5=k+7,
\]
we have $\Delta(k+7)=B$. Since
\[
2+\underbrace{3+3+\cdots+3}_{k-2\text{ times}}+(k+7)=4k+3,
\]
we have $\Delta(4k+3)=R$. Since 
\[
1+\underbrace{4+4+\cdots+4}_{k-7\text{ times}}+5+5+5+5+5+5=4k+3,
\]
we have a monochromatic solution $\mathcal{S}$ to (\ref{Equaiton:Main}) with $c(\mathcal{S})=4$, which is a contradiction.

\textsl{Subsubcase 3.3.3}: $(u_1,u_2,u_3)=(1,3,4)$. Then we have $\Delta(1)=\Delta(3)=\Delta(4)=R$ and $\Delta(2)=B$. Since
\[
\underbrace{1+1+\cdots+1}_{k-2\text{ times}}+3+4=k+5,
\]
\[
\underbrace{1+1+\cdots+1}_{k-3\text{ times}}+3+3+4=k+7,
\]
\[
\underbrace{1+1+\cdots+1}_{k-4\text{ times}}+3+3+3+4=k+9,
\]
and
\[
\underbrace{1+1+\cdots+1}_{k-4\text{ times}}+3+3+4+4=k+10,
\]
we have $\Delta(k+5)=\Delta(k+7)=\Delta(k+9)=\Delta(k+10)=B$. Since
\[
\underbrace{2+2+\cdots+2}_{k-2\text{ times}}+(k+5)+(k+7)=4k+8
\]
and
\[
\underbrace{2+2+\cdots+2}_{k-2\text{ times}}+(k+9)+(k+10)=4k+15,
\]
we have $\Delta(4k+8)=\Delta(4k+15)=R$. Since
\[
\underbrace{1+1+\cdots+1}_{k-2\text{ times}}+3+(4k+8)=5k+9
\]
and
\[
3+\underbrace{4+4+\cdots+4}_{k-2\text{ times}}+(4k+15)=8k+10,
\]
we have $\Delta(5k+9)=\Delta(8k+10)=B$. Since
\[
\underbrace{2+2+\cdots+2}_{k-2\text{ times}}+(k+5)+(5k+9)=8k+10,
\]
we have a monochromatic solution $\mathcal{S}$ to (\ref{Equaiton:Main}) with $c(\mathcal{S})=4$, which is a contradiction. We note that the largest integer used in the identities for Case 3 is $\max\{V+U+(k-\ell)u_j-v,8k+10\}$.

We still need to show that there exists a large enough $K$ such that all the identities we used are valid for all $k\geq K$. For Case 3, since
\[
U=u_1+u_2+\cdots+u_\ell\leq\ell+(\ell+1)+\cdots+(2\ell-1)=\ell(2\ell-1)-\frac{\ell(\ell-1)}{2},
\]
we have
\[
\begin{split}
&V+U+(k-\ell)u_j-v\\=&(k-\ell+1)v+(\ell-1)U+(\ell-1)(k-\ell)u_j+\frac{(\ell-1)(\ell-2)}{2}+U+(k-\ell)u_j-v\\\leq&(k-\ell)\ell+\ell\left[\ell(2\ell-1)-\frac{\ell(\ell-1)}{2}\right]+\ell(k-\ell)(2\ell-2)+\frac{(\ell-1)(\ell-2)}{2},
\end{split}
\]
which is linear in $k$ for a fixed $\ell$. Since $T'$ is quadratic in $k$, there exists $K\geq3\ell-1$ such that
\[
T'\geq\max\{6k+4\ell^2-10\ell+1,V+U+(k-\ell)u_j-v,8k+10\}
\]
for all $k\geq K$. In other words, there exists $K$ such that, for all $k\geq K$, the integers used in the identities are all in $\{1,2,\ldots,T'\}$. Hence, for all $k\in K$, all the identities used in this section involve only the integers in $\{1,2,\ldots,T'\}$. At the same time, since $K\geq3\ell-1$, all the identities in this section involve $\ell$ distinct integers on the left hand side. Hence we have $S_2(k;\ell)\leq T'$ for all $k\geq T$. This completes the proof of the upper bound of Theorem~\ref{Theorem:Main} when $\ell\geq3$.

\section{Discussions}\label{Section:Conluding}
The $2$-coloring in the proof of the lower bound in Theorem~\ref{Theorem:Main} can be augmented to provide lower bounds for $S_r(k;\ell)$ with $r\geq3$. As an example, we prove the following lower bound for $S_3(k;2)$:

\begin{proposition}
We have $S_3(k;2)\geq k^3+3k^2+k-1$ for all $k\geq2$.
\end{proposition}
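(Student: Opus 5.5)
We plan to prove the bound by exhibiting an explicit $3$-coloring of $\{1,2,\ldots,N\}$ with $N:=k^3+3k^2+k-2$ that has no monochromatic solution $\mathcal{S}$ to (\ref{Equaiton:Main}) with $c(\mathcal{S})=3$. We will extend the $2$-coloring of Section~\ref{Section:Lower_Bound} (with $\ell=2$, so $T=k$ and $T'=k^2+2k$) by adding a green block above it, then another blue block, then another red block. Concretely, we intend to use the colors $R,B,G$ on the blocks
\[
R_1=[1,k],\quad B_1=[k+1,k^2+k],\quad R_2=[k^2+k+1,k^2+2k-1],\quad G=[k^2+2k,k^3+2k^2],
\]
\[
B_2=[k^3+2k^2+1,k^3+3k^2-1],\quad R_3=[k^3+3k^2,k^3+3k^2+k-2],
\]
where $[a,b]$ denotes $\{a,a+1,\ldots,b\}$.

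By Lemma~\ref{Lemma:Disjunctive} with $\ell=2$, it suffices to consider sums $t_1x_1+t_2x_2$ with $x_1<x_2$ of the same color, $t_1,t_2\ge1$ and $t_1+t_2=k$. Such a sum is at least $(k-1)x_1+x_2$ and at most $kx_2-1$. For each color we then split according to the block containing the larger summand $x_2$.

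\textbf{Red.} If $x_2\in R_1$, the sum lies in $[k+1,k^2-1]\subseteq B_1$. If $x_2\in R_2$, the sum is at least $(k-1)+k^2+k+1=k^2+2k$ and at most $k(k^2+2k-1)-1=k^3+2k^2-k-1$, so it lies in $G$. If $x_2\in R_3$, the sum is at least $(k-1)+k^3+3k^2=N+1$, so it falls outside the interval.

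\textbf{Blue.} If $x_2\in B_1$, the sum lies in $[k^2+k+1,k^3+k^2-1]\subseteq R_2\cup G$. If $x_2\in B_2$, the sum is at least $(k-1)(k+1)+k^3+2k^2+1=k^3+3k^2$, so it lies in $R_3$ or beyond $N$.

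\textbf{Green.} Here both summands lie in $G$, so the sum is at least $(k-1)(k^2+2k)+(k^2+2k+1)=k^3+2k^2+1$. This exceeds $\max G$, so the sum is never green.

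Having no monochromatic solution in $\{1,\ldots,N\}$ then gives $S_3(k;2)\ge N+1=k^3+3k^2+k-1$.

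The main obstacle is choosing the block endpoints so that every cross-block sum (red $R_1$ with $R_2$, and blue $B_1$ with $B_2$) avoids its own color, while the blocks are still long enough to reach $N$. The endpoints above are tuned so that each relevant minimum sum lands exactly one past the end of its block. The remaining step is to check the boundary cases for small $k$: at $k=2$, for instance, $R_3=[20,20]$ is a single element, and we must confirm that all the intervals are nonempty and correctly ordered.
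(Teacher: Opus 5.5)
Your proof is correct and uses the same method as the paper. Both arguments extend the two-coloring of Section~\ref{Section:Lower_Bound} with the green block $[k^2+2k,k^3+2k^2]$, then check each color class using $(k-1)x_1+x_2\le t_1x_1+t_2x_2\le kx_2-1$.

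The only difference is at the top of the range. The paper colors $[k^3+2k^2+1,k^3+2k^2+k-1]$ red and $[k^3+2k^2+k,k^3+3k^2+k-2]$ blue, while you put the blue block first and the red block last. Both orders work for the same reason. Each top block has length $(k-1)$ times the least element of its color: $k-1$ for red and $k^2-1$ for blue. Hence the smallest same-color sum whose larger summand lies in the lower top block is exactly the first element of the upper top block. Likewise, the smallest such sum from the upper top block is $N+1$. The total length $k^2+k-2$ does not depend on the order.

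So your version shows that the extremal coloring for this bound is not unique. Your explicit case analysis also supplies the verification the paper only calls ``straightforward.''

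The small-$k$ check you list as remaining is not needed. Every inequality you wrote holds verbatim for all $k\ge2$, and an empty block would not break the argument anyway.
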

\begin{proof}
Let $k\geq2$ and let
\[
A_R=\{1,2,\ldots,k\}\cup\{k^2+k+1,k^2+k+2,\ldots,k^2+2k-1\}\cup\{k^3+2k^2+1,k^3+2k^2+2,\ldots,k^3+2k^2+k-1\},
\]
\[
A_B=\{k+1,k+2,\ldots,k^2+k\}\cup\{k^3+2k^2+k,k^3+2k^2+k+1,\ldots,k^3+3k^2+k-2\},
\]
and
\[
A_G=\{k^2+2k,k^2+2k+1,\ldots,k^3+2k^2\}.
\]
Let $\Delta:\{1,2,\ldots,k^3+3k^2+k-2\}\to\{R,B,G\}$ be a $3$-coloring such that $\Delta(a)=R$ for all $a\in A_R$, $\Delta(a)=B$ for all $a\in A_B$, and $\Delta(a)=G$ for all $a\in A_G$.

Similar to the proof of the lower bound in Theorem~\ref{Theorem:Main}, it is straightforward to check that none of $A_R$, $A_B$, or $A_G$ has a solution $\mathcal{S}$ to (\ref{Equaiton:Main}) such that $c(\mathcal{S})=3$. Hence $\Delta$ does not have a solution $\mathcal{S}$ to (\ref{Equaiton:Main}) such that $c(\mathcal{S})=3$. This completes the proof.
\end{proof}

An open question is whether $S_3(k;2)= k^3+3k^2+k-1$ for all large enough $k$.
\begin{problem}
Is it true that $S_3(k;2)= k^3+3k^2+k-1$ for all large enough $k$?
\end{problem}

For any $k,\ell,r\in\mathbb{N}$ with $\ell\leq k$ and $k\geq2$, let $S_r(k;\tilde{\ell})$ be the smallest $n\in\mathbb{N}$ such that every $r$-coloring of $\{1,2,\ldots,n\}$ has a monochromatic solution $\mathcal{S}$ to (\ref{Equaiton:Main}) such that $c(\mathcal{S})\geq\ell$. Notice that $S_r(k;\tilde{\ell})$ always exists because we have
\[
S_r(k;\tilde{\ell})\leq S_r(k;\tilde{k})=S_r(k;k)=S_r^*(k)
\]
and, as mentioned in Section~\ref{Section:Intro}, $S_r^*(k)$ exists. 

By the definitions of $S_r(k;\ell)$ and $S_r(k;\tilde{\ell})$, it is clear that $S_r(k;\tilde{\ell})\leq S_r(k;\ell)$ and $S_r(k;\tilde{1})=S_r(k)$. It is also straightforward to see that the lower bound in Theorem~\ref{Theorem:Main} also holds for $S_r(k;\tilde{\ell})$. Hence, we have the following:

\begin{corollary}\label{Theorem:Corollary}
For all $k,\ell\in\mathbb{N}$ with $\ell\leq k$ and $k\geq2$, we have
    \[
    S_2(k;\tilde{\ell})\geq k^2+\left[\frac{(\ell+1)(\ell-2)}{2}+2\right]k+\ell(\ell-2);
    \]
furthermore, fixing $\ell$, if $k$ is large enough, then we have
 \[
    S_2(k;\tilde{\ell})= k^2+\left[\frac{(\ell+1)(\ell-2)}{2}+2\right]k+\ell(\ell-2).
    \]
In particular, we have $S_2(k;\tilde{2})=k^2+2k$ for all $k\geq3$.
\end{corollary}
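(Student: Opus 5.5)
The plan is to squeeze $S_2(k;\tilde{\ell})$ between two bounds that both equal $T'$. The upper bound is immediate from the definitions and Theorem~\ref{Theorem:Main}; the lower bound reuses the coloring from Section~\ref{Section:Lower_Bound}. Throughout I read the condition in the definition of $S_r(k;\tilde{\ell})$ as ``at least $\ell$ distinct summands on the left'', i.e.\ $c(\mathcal{S})\geq\ell+1$. This is the reading under which the displayed formula and the identity $S_r(k;\tilde 1)=S_r(k)$ are consistent. Write $T$ and $T'$ as in Section~\ref{Section:Lower_Bound}.

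\emph{Upper bound.} Any solution with $c(\mathcal{S})=\ell+1$ also satisfies $c(\mathcal{S})\geq\ell+1$. Hence $S_2(k;\tilde{\ell})\leq S_2(k;\ell)$, and for fixed $\ell\geq2$ and large $k$ Theorem~\ref{Theorem:Main} gives $S_2(k;\ell)=T'$. For $\ell=2$ this already holds for all $k\geq3$, which gives the ``in particular'' statement once the lower bound is in place. For $\ell=1$ the formula reduces to $k^2+k-1$. Since every solution has $c(\mathcal{S})\geq2$, we get $S_2(k;\tilde 1)=S_2(k)$, and Zn\'am's result $S_2(k)=k^2+k-1$ settles both bounds at once in that case.

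\emph{Lower bound for $\ell\geq2$.} I would take the same coloring $\Delta$ of $\{1,\dots,T'-1\}$ into $A_R'\cup A_R''$ and $A_B$, with $T$ defined using $\ell$. I then redo the three estimates of Section~\ref{Section:Lower_Bound} with $m$ distinct summands for an arbitrary $m$ satisfying $\ell\leq m\leq k$, instead of exactly $\ell$. The key observation is that each minimal sum used there is nondecreasing in the number $m$ of distinct summands: replacing a repeated smallest element by a new, larger distinct element can only increase the minimum. Concretely, I would check the following three cases.
\begin{itemize}
\item Red, largest summand in $A_R'$. The sum is at least $(k-m)+1+2+\cdots+m\geq(k-\ell)+1+\cdots+\ell=T+1$. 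It is also at most $kT<(k+1)T+1$, so it lies in $A_B$.
\item Red, largest summand in $A_R''$. The sum is at least $(k-m)+1+\cdots+(m-1)+(k+1)T+1\geq T'$, which lies outside the colored range.
\item Blue. The sum is at least $k(T+1)+\binom{m}{2}\geq k(T+1)+\binom{\ell}{2}$. Using $T=k+(\ell+1)(\ell-2)/2$, one computes that $k(T+1)+\binom{\ell}{2}=(k+1)T+1$, so the sum leaves $A_B$.
\end{itemize}
Hence $\Delta$ has no monochromatic solution with $c(\mathcal{S})\geq\ell+1$, and so $S_2(k;\tilde{\ell})\geq T'$ for all $k\geq\ell$.

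\emph{Main obstacle.} The only real point is the monotonicity of these minimal sums in $m$; the rest is inherited verbatim from the paper. I would verify it by the elementary inequality $1+\cdots+(m+1)-1\geq 1+\cdots+m$, and similarly for the shifted sums in the blue case.
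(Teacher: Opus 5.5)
Your proposal is correct and follows the paper's own route. The upper bound comes from $S_2(k;\tilde{\ell})\le S_2(k;\ell)$ together with Theorem~\ref{Theorem:Main} (and Zn\'{a}m's $S_2(k)=k^2+k-1$ for $\ell=1$). The lower bound comes from the coloring of Section~\ref{Section:Lower_Bound}, and your monotonicity-in-$m$ check supplies exactly the ``straightforward'' verification the paper leaves to the reader.

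Your reading $c(\mathcal{S})\geq\ell+1$ is the right one. It is forced by the displayed formula and by $S_r(k;\tilde{k})=S_r(k;k)$, though, not by $S_r(k;\tilde{1})=S_r(k)$, which holds under either reading. Under the literal $c(\mathcal{S})\geq\ell$, one would instead get $S_2(k;\tilde{2})=k^2+k-1$.
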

Corollary~\ref{Theorem:Corollary} generalizes, for large enough $k$, the fact that $S_2(k;\tilde{1})=S_2(k)=k^2+k-1$. Similar to $S_2(k;2)$, we have $S_2(2;\tilde{2})=9$ and $S_2(k;\tilde{2})=k^2+2k$ for all $k\geq3$. When $\ell\geq3$, the exact thresholds for $k$ in Theorem~\ref{Theorem:Main} and Corollary~\ref{Theorem:Corollary} remain open.

\begin{problem}
Let $\ell\geq3$. What is the smallest $K\in\mathbb{N}$ such that
\[
    S_2(k;\ell)= k^2+\left[\frac{(\ell+1)(\ell-2)}{2}+2\right]k+\ell(\ell-2)
    \]
for all $k\geq K$? What about $S_2(k;\tilde{\ell})$?
\end{problem}


\begin{thebibliography}{10}
\bibitem{ABRS2023} T. Ahmed, L. Boza, M. P. Revuelta, and M. I. Sanz, Exact values and lower bounds on the $n$-color weak Schur numbers for $n=2,3$, \textit{Ramanujan J.} \textbf{62} (2023), 347-363.

\bibitem{BB1982} A. Beutelspacher and W. Brestovansky, Generalized Schur numbers, in D. Jungnickel and K. Vedder (eds), \textit{Combinatorial Theory}, Lecture Notes in Mathematics, vol 969, Springer, Berlin, Heidelberg, 1982, 30-38.

\bibitem{BMRS2019} L. Boza, J. M. Mar\'{i}n, M. P. Revuelta, and M. I. Sanz, 3-color Schur numbers, \textit{Discrete Appl. Math.} \textbf{263} (2019), 59-68.

\bibitem{DMT2022} A. Dileep, J. Moondra, and Tripathi, New proofs for the disjunctive Rado number of the equations $x_1-x_2=a$ and $x_1-x_2=b$, \textit{Graphs Combin.} \textbf{38} (2022), 38.

\bibitem{DMT2024} A. Dileep, J. Moondra, and A. Tripathi, On disjunctive Rado numbers for some sets of equations, {\it Electron. J. Combin.} \textbf{31} (2024), no. 1, \#P1.69.

\bibitem{DT2025} S. Dwivedi and A. Tripathi, On the two-color disjunctive Rado number for the equations $\sum_{i=1}^{m-2}x_i+ax_{m-1}-x_m=c_j$, $j=1,2$, \textit{Integers} \textbf{25} (2025), \#A108.

\bibitem{EG2021} \"{O}. E\u{g}ecio\u{g}lu and A. Garsia,
\textit{Lessons in Enumerative Combinatorics}, Springer, Graduate Texts in Mathematics, 2021.

\bibitem{GGL2014} K. Gandhi, N. Golowich, and L. M. Lov\'{a}sz, Degree of regularity of linear homogeneous equations and inequalities, \textit{J. Comb.} \textbf{5} (2014), no. 2, 235-243.

\bibitem{HL1998} N. Hindman and I. Leader, Partition regular inequalities, \textit{European J. Combin.} \textbf{19} (1998), 573-758.

\bibitem{Irving1973} R. W. Irving, An extension of Schur’s theorem on sum-free partitions, \textit{Acta Arith.} \textbf{XXV} (1973), 55-64.

\bibitem{JS2005} B. Johnson and D. Schaal, Disjunctive Rado numbers, \textit{J. Combin. Theory Ser. A} \textbf{112} (2005), 263-276.

\bibitem{LR2014} B. M. Landman and A. Robertson, \textit{Ramsey Theory on the Integers}, Second Edition, Student Mathematical Library Volume 73, American Mathematical Society, Providence, 2014.

\bibitem{KS2003} W. Kosek and D. Schaal, A note on disjunctive Rado numbers, \textit{Adv. in Appl. Math.} \textbf{31} (2003), 433-439.

\bibitem{Rado1933} R. Rado, Studien zur Kombinatorik, \textit{Math. Z.} \textbf{36} (1933), no. 1, 424-480.

\bibitem{Robertson2000} A. Robertson, Difference Ramsey numbers and Issai numbers, \textit{Adv. in Appl. Math.} \textbf{25} (2000), 153-162.

\bibitem{Schur1916} I. Schur, \"{U}ber die Kongruenz $x^m+y^m\equiv z^m\pmod p$, \textit{Jahresber. Dtsch. Math.-Ver.} \textbf{25} (1916), 114-117.

\bibitem{Znam1966} S. Zn\'{a}m, Generalization of a number-theoretical result, \textit{Mat.-Fyz. Čas.} \textbf{16} (1966), no. 4, 357-361.

\bibitem{Znam1967} S. Zn\'{a}m, On $k$-thin sets and $n$-extensive graphs, \textit{Mat. Čas.} \textbf{17} (1967), no. 4, 297-307.
\end{thebibliography}
\end{document}